\documentclass{article}

\usepackage{amsmath,amssymb,amsthm}
\usepackage{cite}

\usepackage{a4wide}

\allowdisplaybreaks


\newtheorem{dfn}{Definition}
\newtheorem{thm}[dfn]{Theorem}
\newtheorem{ex}[dfn]{Example}

\newtheorem{lem}[dfn]{Lemma}
\newtheorem{remark}[dfn]{Remark}


\title{Novel results on Hermite--Hadamard kind inequalities\\ 
for $\eta$-convex functions by means\\ 
of $(k,r)$-fractional integral operators\thanks{This is a preprint of a paper 
whose final and definite form is a Springer chapter in the Book
\emph{Advances in Mathematical Inequalities and Applications},
published under the Birkhauser series \emph{Trends in Mathematics},
ISSN: 2297-0215 [{\tt http://www.springer.com/series/4961}].}}

\author{Eze R. Nwaeze$^1$\\
{\tt enwaeze@tuskegee.edu}
\and
Delfim F. M. Torres$^2$\thanks{Corresponding author.}\\
{\tt delfim@ua.pt}}

\date{$^1$Department of Mathematics, Tuskegee University,\\
Tuskegee, AL 36088, USA\\[0.3cm]
$^2$CIDMA, Department of Mathematics, University of Aveiro,\\
3810-193 Aveiro, Portugal}


\begin{document}

\maketitle

\begin{abstract}
We establish new integral inequalities of Hermite--Hadamard type 
for the recent class of $\eta$-convex functions.  
This is done via generalized $(k,r)$-Riemann--Liouville 
fractional integral operators. Our results generalize 
some known theorems in the literature. By choosing different 
values for the parameters $k$ and $r$, 
one obtains interesting new results.

\bigskip

\noindent \textbf{Keywords:} Hermite--Hadamard inequalities, 
$\eta$-convexity, Riemann--Liouville integrals.

\medskip

\noindent \textbf{2010 Mathematics Subject Classification:} 26A51, 26D15.
\end{abstract}


\section{Introduction}
\label{intro}

Throughout this work, $I\subset\mathbb{R}$ shall denote 
an interval and $I^{\circ}$ the interior of $I$. 
We say that a function $g:I\rightarrow\mathbb{R}$ is convex 
if, for every $x, y\in I$ and $\beta\in[0, 1]$, one has 
\begin{equation}
\label{CD}
g(\beta x+(1-\beta)y)\leq \beta g(x)+(1-\beta)g(y).
\end{equation}
Let $a, b\in I$. For a function $g$ satisfying \eqref{CD}, 
the following inequalities hold:
\begin{equation}
\label{HH1}
g\left(\frac{a+b}{2}\right)
\leq \frac{1}{b-a}\int_a^bg(x)\,dx\leq \frac{g(a)+g(b)}{2}.
\end{equation}
Result \eqref{HH1} was proved by Hadamard in 1893 \cite{Hadamard}
and is celebrated in the literature as the Hermite--Hadamard 
integral inequality for convex functions \cite{MR2761482}. 
Along the years, it has been extended to different classes 
of convex functions: see, e.g., \cite{MR3670501,MR3684864,MR3639749} 
and references therein.

In 2016, the so called $\varphi$-convexity was introduced \cite{GDS},
subsequently denoted as $\eta$-convexity \cite{DS,GDD}. 
Let us recall its definition here.

\begin{dfn}[See \cite{GDS}]
\label{MD}
A function $g:I\rightarrow\mathbb{R}$ is called convex with respect to $\eta$ 
(for short, $\eta$-convex), if 
$$
g(\beta x+ (1-\beta)y)\leq g(y)+\beta \eta(g(x),g(y))
$$ 
for all $x,y\in I$ and $\beta\in[0, 1].$
\end{dfn}

By taking $\eta(x, y)=x-y$, Definition~\ref{MD} reduces to the classical notion 
\eqref{CD} of convexity. It was further shown in \cite{GDS} that for every convex 
function $g$ there exists some $\eta$, different from $\eta(x,y)=x-y$, for which 
the function $g$ is $\eta$-convex. The converse is, however, not necessarily true, 
that is, there are $\eta$-convex functions that are not convex. 

\begin{ex}
Consider function $g:\mathbb{R}\rightarrow\mathbb{R}$ defined piecewisely by
$$
g(x)=
\begin{cases}
-x, ~~x\geq 0,\\
x, ~~x<0,
\end{cases}
$$
and let $\eta:[-\infty,0]\times[-\infty,0]\rightarrow\mathbb{R}$ 
be given by $\eta(x,y)=-x-y$. Function $g$ is clearly not convex 
but it is easy to see that it is $\eta$-convex. Indeed, in 
\cite[Remark~4]{DS} it is noted that an $\eta$-convex function 
$g:[a, b]\rightarrow\mathbb{R}$ is integrable if $\eta$ 
is bounded from above on $g([a, b])\times g([a, b])$.
\end{ex}

For the class of $\eta$-convex functions, the following theorem 
was obtained as an analogue of \eqref{HH1}. 

\begin{thm}[See \cite{GDS}]
Suppose that $g:I\rightarrow\mathbb{R}$ is an $\eta$-convex function 
such that $\eta$ is bounded from above on $g(I)\times g(I)$. 
Then, for any $a,b\in I$ with $a<b$, 
$$
2g\left(\frac{a+b}{2}\right)-M_{\eta}
\leq\frac{1}{b-a}\int_a^b g(x)\,dx
\leq f(b)+\frac{\eta(g(a),g(b))}{2},
$$
where $M_{\eta}$ is an upper bound 
of $\eta$ on $g([a, b])\times g([a, b])$.
\end{thm}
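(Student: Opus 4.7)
My plan is to prove the two inequalities separately. In both cases, the mechanism is the same: specialise Definition~\ref{MD} with judicious choices of $x$, $y$, and $\beta$, then integrate in $\beta$ over $[0,1]$ and apply the linear substitution $u=\beta a+(1-\beta)b$, which turns $\int_0^1 g(\beta a+(1-\beta)b)\,d\beta$ into $\frac{1}{b-a}\int_a^b g(u)\,du$.

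For the upper bound I would apply Definition~\ref{MD} directly with $x=a$ and $y=b$, giving
\[
g(\beta a+(1-\beta)b)\leq g(b)+\beta\,\eta\bigl(g(a),g(b)\bigr).
\]
Integrating in $\beta$ over $[0,1]$ and using $\int_0^1 \beta\,d\beta=\tfrac{1}{2}$ then produces exactly $\frac{1}{b-a}\int_a^b g(x)\,dx\leq g(b)+\tfrac{1}{2}\eta(g(a),g(b))$. Integrability of $g$ on $[a,b]$ (guaranteed, as noted in the excerpt, by boundedness of $\eta$ from above) is what legitimises this step.

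For the lower bound I would write the midpoint as a convex combination of two symmetric interior points,
\[
\frac{a+b}{2}=\frac{1}{2}\bigl(\beta a+(1-\beta)b\bigr)+\frac{1}{2}\bigl(\beta b+(1-\beta)a\bigr),
\]
and apply Definition~\ref{MD} with $\beta=\tfrac{1}{2}$ in \emph{both} orderings of these two points. Adding the two resulting inequalities converts the left-hand side into $2g\bigl(\tfrac{a+b}{2}\bigr)$ while producing $g(\beta a+(1-\beta)b)+g(\beta b+(1-\beta)a)$ together with an $\eta$-remainder on the right. Bounding each $\eta$-term by $M_\eta$, rearranging, and integrating in $\beta$ over $[0,1]$ (where the two resulting $g$-integrals both reduce to $\frac{1}{b-a}\int_a^b g(x)\,dx$ after the change of variables), yields the claimed bound $2g\bigl(\tfrac{a+b}{2}\bigr)-M_\eta\leq \frac{1}{b-a}\int_a^b g(x)\,dx$ up to the numerical factor on the integral.

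The main obstacle is the \emph{asymmetry} of Definition~\ref{MD} in its arguments $x$ and $y$: unlike in the classical convex case, halving a single $\eta$-convexity inequality at $\beta=\tfrac{1}{2}$ does not yield $g\bigl(\tfrac{a+b}{2}\bigr)\leq\tfrac{1}{2}(g(x)+g(y))$, so the ``symmetrising'' step (adding two oppositely-ordered $\eta$-convex inequalities) is essential. It is precisely the hypothesis that $\eta$ is bounded from above by $M_\eta$ on $g([a,b])\times g([a,b])$ that allows the two $\eta$-remainders produced by this symmetrisation to be absorbed into a single constant term on the left.
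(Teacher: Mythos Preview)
This theorem is quoted from \cite{GDS} and the present paper does not supply its own proof, so there is nothing to compare against directly. Evaluating your argument on its merits: the upper bound is proved correctly, and your method for the lower bound (symmetrise the midpoint as $\tfrac12\bigl(\beta a+(1-\beta)b\bigr)+\tfrac12\bigl((1-\beta)a+\beta b\bigr)$, apply $\eta$-convexity, bound the $\eta$-terms by $M_\eta$, and integrate in $\beta$) is the standard one.

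The ``numerical factor'' you flag is not a flaw in your argument but in the statement as printed here. Your computation genuinely yields
\[
2g\Bigl(\tfrac{a+b}{2}\Bigr)-M_\eta\;\le\;\frac{2}{b-a}\int_a^b g(x)\,dx,
\quad\text{equivalently}\quad
g\Bigl(\tfrac{a+b}{2}\Bigr)-\tfrac{M_\eta}{2}\;\le\;\frac{1}{b-a}\int_a^b g(x)\,dx,
\]
and this is the sharp form. The inequality as quoted, with $\frac{1}{b-a}\int_a^b g$ on the right, is false: take $g\equiv 1$ and $\eta(x,y)=x-y$, so that $g$ is $\eta$-convex with $M_\eta=0$, and the left inequality would read $2\le 1$. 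So your proof is complete once you recognise that the factor of~$2$ belongs on the integral (or, equivalently, that the constant on the left should be $M_\eta/2$ with the leading coefficient $1$ on $g$). Note also the typo $f(b)$ for $g(b)$ on the right-hand side of the quoted display.
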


Recently, Rostamian Delavar and De La Sen obtained, among other results, 
the following theorem associated to $\eta$-convex functions \cite{DS}. 

\begin{thm}[See \cite{DS}]
Suppose $g:[a, b]\rightarrow\mathbb{R}$ is a differentiable function and 
$|g'|$ is an $\eta$-convex function with $\eta$ bounded from above 
on $[a, b]$. Then,
\begin{equation*}
\left|\frac{g(a)+g(b)}{2}-\frac{1}{b-a}\int_a^bg(x)\,dx\right|
\leq \frac{1}{8}(b-a)K,
\end{equation*}
where
$K=\min\left\{|g'(b)|+\frac{|\eta(g'(a), g'(b))|}{2},
~ |g'(a)|+\frac{|\eta(g'(b), g'(a))|}{2}\right\}$.
\end{thm}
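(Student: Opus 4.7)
The proof should follow the classical trapezoid-type scheme. I would first establish the integral representation
$$\frac{g(a)+g(b)}{2} - \frac{1}{b-a}\int_a^b g(x)\,dx = \frac{b-a}{2}\int_0^1 (1-2t)\,g'(ta+(1-t)b)\,dt,$$
which is obtained by integration by parts (for instance, applied to $\int_a^b (x-\tfrac{a+b}{2})g'(x)\,dx$) followed by the affine substitution $x = ta+(1-t)b$. Passing to absolute values and moving the modulus inside the integral then yields
$$\left|\frac{g(a)+g(b)}{2} - \frac{1}{b-a}\int_a^b g(x)\,dx\right| \leq \frac{b-a}{2}\int_0^1 |1-2t|\,\bigl|g'(ta+(1-t)b)\bigr|\,dt.$$

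I would next invoke the $\eta$-convexity of $|g'|$ in its two natural forms. Using Definition~\ref{MD} with $x=a$, $y=b$, $\beta=t$ gives
$$\bigl|g'(ta+(1-t)b)\bigr| \leq |g'(b)| + t\,\eta(|g'(a)|,|g'(b)|),$$
and swapping the roles of $a,b$ and using $\beta = 1-t$ gives the companion bound
$$\bigl|g'(ta+(1-t)b)\bigr| \leq |g'(a)| + (1-t)\,\eta(|g'(b)|,|g'(a)|).$$
Plugging each of these into the majorising integral above, and using the pointwise estimate $t\eta \leq t|\eta|$ (respectively $(1-t)\eta \leq (1-t)|\eta|$) to accommodate an $\eta$ of arbitrary sign, produces two separate valid upper bounds for the left-hand side; their minimum is what yields the constant $K$ in the conclusion.

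The calculation then reduces to three elementary integrals obtained by splitting $[0,1]$ at $t=1/2$, namely $\int_0^1 |1-2t|\,dt = \tfrac{1}{2}$ and $\int_0^1 t|1-2t|\,dt = \int_0^1 (1-t)|1-2t|\,dt = \tfrac{1}{4}$. Substituting these values and collecting terms produces an expression of the form $c\,(b-a)\bigl[|g'(\cdot)|+\tfrac{1}{2}|\eta(\cdot,\cdot)|\bigr]$ with an explicit absolute constant $c$, and passing to the minimum over the two versions completes the argument.

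\textbf{Main obstacle.} Once the trapezoid identity is written down, the remainder is mechanical, so the only point requiring care is sign-tracking for $\eta$. Because $\eta$-convexity is a \emph{one-sided} inequality and $\eta$ need not be nonnegative, one must verify that replacing $\eta$ by $|\eta|$ in the resulting majorant is justified pointwise by the nonnegativity of the weight $|1-2t|$ together with that of $t$ and $1-t$. With that verification in place, the rest is bookkeeping and routine integration.
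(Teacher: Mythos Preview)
The paper does not prove this theorem; it is quoted from \cite{DS} as background, so there is no in-paper argument to compare against.

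On its own merits your plan is the standard one and every step you describe is correct: the trapezoid identity, the two $\eta$-convexity bounds, the passage from $\eta$ to $|\eta|$ using nonnegativity of the weights, and the elementary integrals $\int_0^1|1-2t|\,dt=\tfrac12$ and $\int_0^1 t|1-2t|\,dt=\int_0^1(1-t)|1-2t|\,dt=\tfrac14$. The only issue is the value of your unspecified constant $c$. Carrying the computation through gives
\[
\frac{b-a}{2}\Bigl(\tfrac12\,|g'(b)|+\tfrac14\,\bigl|\eta(|g'(a)|,|g'(b)|)\bigr|\Bigr)
=\frac{b-a}{4}\Bigl(|g'(b)|+\tfrac12\,|\eta|\Bigr),
\]
so $c=\tfrac14$, not the $\tfrac18$ printed in the statement. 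This is not a defect in your argument: the printed $\tfrac18$ appears to be a transcription slip. Indeed, in the classical case $\eta(x,y)=x-y$ (without the outer absolute value on $\eta$) your bound with $c=\tfrac14$ collapses exactly to the Dragomir--Agarwal inequality $\tfrac{b-a}{8}\bigl(|g'(a)|+|g'(b)|\bigr)$, whereas the printed form would produce an unjustified extra factor of $\tfrac12$. You should also note, as you implicitly do, that the arguments of $\eta$ must be $|g'(a)|,|g'(b)|$ rather than $g'(a),g'(b)$, since it is $|g'|$ that is assumed $\eta$-convex; this is another minor slip in the paper's transcription of the result from \cite{DS}.
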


Still in the same spirit, Khan et al. established in 2017 the following 
result for $\eta$-convex functions via Riemann--Liouville fractional 
integral operators \cite{KKA}. 

\begin{thm}[See \cite{KKA}]
\label{KKAResult} 
Let $g:[a, b]\rightarrow\mathbb{R}$ be a differentiable function on $(a, b)$ with $a<b$. 
If $|g'|$ is an $\eta$-convex function on $[a, b]$, then for $\alpha>0$ the inequality 
\begin{multline*}
\left|\frac{g(a)+g(b)}{2}-\frac{\Gamma(\alpha+1)}{2(b-a)^{\alpha}}
\left[J_{a_+}^{\alpha}g(b) + J_{b_-}^{\alpha}g(a)\right]\right|\\
\leq \frac{b-a}{2(\alpha+1)}\left(1-\frac{1}{2^{\alpha}}\right)
\left(2|g'(b)|+\eta(|g'(a)|, |g'(b)|)\right)
\end{multline*}
holds, where 
$$
J_{a_+}^{\alpha}g(x)=\frac{1}{\Gamma_1(\alpha)}\int_a^x(x-t)^{\alpha-1}g(t)\,dt
$$
is the left Riemann--Liouville fractional integral and
$$
J_{b_-}^{\alpha}g(x)=\frac{1}{\Gamma_1(\alpha)}\int_x^b(t-x)^{\alpha-1}g(t)\,dt
$$
is the right Riemann--Liouville fractional integral.
\end{thm}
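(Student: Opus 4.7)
The plan is to reduce the result to a single one-variable integral estimate by first rewriting the left-hand side as an integral against $g'$ via an identity of Sarikaya--Set--Yaldiz--Ba\c{s}ak type. Concretely, I would use (or establish, if not yet recorded in the text) the identity
\begin{equation*}
\frac{g(a)+g(b)}{2}-\frac{\Gamma(\alpha+1)}{2(b-a)^{\alpha}}\left[J_{a_+}^{\alpha}g(b)+J_{b_-}^{\alpha}g(a)\right]
=\frac{b-a}{2}\int_{0}^{1}\bigl[(1-t)^{\alpha}-t^{\alpha}\bigr]\,g'\!\bigl(ta+(1-t)b\bigr)\,dt,
\end{equation*}
which follows from integration by parts applied separately to the two Riemann--Liouville pieces after the change of variables $t\mapsto ta+(1-t)b$. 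This turns the problem into bounding a single integral.

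Next I would take absolute values inside the integral and apply the $\eta$-convexity of $|g'|$ with the choice $\beta=t$, $x=a$, $y=b$ in Definition~\ref{MD}, obtaining
\begin{equation*}
\bigl|g'(ta+(1-t)b)\bigr|\le |g'(b)|+t\,\eta\bigl(|g'(a)|,|g'(b)|\bigr).
\end{equation*}
Substituting into the identity yields the bound $\frac{b-a}{2}\bigl[|g'(b)|\,I_{1}+\eta(|g'(a)|,|g'(b)|)\,I_{2}\bigr]$, where
\begin{equation*}
I_{1}=\int_{0}^{1}\bigl|(1-t)^{\alpha}-t^{\alpha}\bigr|\,dt,\qquad
I_{2}=\int_{0}^{1}t\bigl|(1-t)^{\alpha}-t^{\alpha}\bigr|\,dt.
\end{equation*}

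The remaining step is the explicit evaluation of $I_{1}$ and $I_{2}$, which is the only place where care is needed. For $I_{1}$ I would split at $t=1/2$, use that $(1-t)^{\alpha}\ge t^{\alpha}$ on $[0,1/2]$ (and vice versa on $[1/2,1]$), exploit the symmetry $t\leftrightarrow 1-t$, and then integrate directly to get $I_{1}=\frac{2}{\alpha+1}\bigl(1-2^{-\alpha}\bigr)$. The main obstacle is $I_{2}$, because a naive split produces two Beta-type integrals that do not simplify; the trick is to apply the substitution $u=1-t$ to the piece on $[1/2,1]$, so that
\begin{equation*}
I_{2}=\int_{0}^{1/2}t\bigl[(1-t)^{\alpha}-t^{\alpha}\bigr]dt+\int_{0}^{1/2}(1-u)\bigl[(1-u)^{\alpha}-u^{\alpha}\bigr]du
=\int_{0}^{1/2}\bigl[(1-t)^{\alpha}-t^{\alpha}\bigr]dt=\tfrac{1}{2}I_{1}.
\end{equation*}
Hence $I_{2}=\frac{1}{\alpha+1}(1-2^{-\alpha})$, and plugging the values of $I_{1}$ and $I_{2}$ into the estimate gives exactly $\frac{b-a}{2(\alpha+1)}\bigl(1-2^{-\alpha}\bigr)\bigl[2|g'(b)|+\eta(|g'(a)|,|g'(b)|)\bigr]$, which is the required bound.
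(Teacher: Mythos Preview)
Your proof is correct. In this paper the statement is cited from \cite{KKA} and recovered as the $r=0$, $k=1$ specialization of Theorem~\ref{MR2} (see Remark~\ref{r2}); your argument is precisely that specialization carried out directly: your Sarikaya--Set--Yaldiz--Ba\c{s}ak identity is Lemma~\ref{lem1} with $r=0$, $k=1$ (where $\Theta_{\alpha,0}(t)=2(b-a)^{\alpha}[(1-t)^{\alpha}-t^{\alpha}]$), your $\eta$-convexity bound is inequality~\eqref{Qua}, and your $I_1$, $I_2$ are exactly the specialized values of $\Re/(b-a)$ and $\Xi/(b-a)^2$ quoted in Remark~\ref{r2}.
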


Fractional calculus is an area under strong development \cite{MR3614829},
and in \cite{SDKA} Sarikaya et al. proposed the following broader definition 
of the Riemann--Liouville fractional integral operators.

\begin{dfn}[See \cite{SDKA}]
The $(k,r)$-Riemann--Liouville fractional integral operators 
$_{k}^{r}{\mathcal{J}}_{a^+}^{\alpha}$ and $_{k}^{r}{\mathcal{J}}_{b^-}^{\alpha}$ 
of order $\alpha>0$, for a real valued continuous function $g(x)$, are defined as
\begin{equation}
\label{D1}
_{k}^{r}{\mathcal{J}}_{a^+}^{\alpha}g(x)
=\frac{(r+1)^{1-\frac{\alpha}{k}}}{k\Gamma_k(\alpha)}
\int_a^x(x^{r+1}-t^{r+1})^{\frac{\alpha}{k}-1}t^rg(t)\,dt, \quad x>a,
\end{equation}
and  
\begin{equation}
\label{D2}
^{r}_{k}{\mathcal{J}}_{b^-}^{\alpha}g(x)
=\frac{(r+1)^{1-\frac{\alpha}{k}}}{k\Gamma_k(\alpha)}\int_x^b
\left(t^{r+1}-x^{r+1}\right)^{\frac{\alpha}{k}-1}t^rg(t)\,dt, \quad x<b,
\end{equation}
where $k>0$, $r\in\mathbb{R}\setminus\{-1\}$, and $\Gamma_k$ 
is the $k$-gamma function given by 
$$
\Gamma_k(x):=\int_{0}^{\infty}t^{x-1}e^{-\frac{t^k}{k}}\,dt, \quad Re(x)>0,
$$
with the properties $\Gamma_k(x+k)=x\Gamma_k(x)$ and $\Gamma_k(k)=1$.
\end{dfn}

For some results related to the operators \eqref{D1} and \eqref{D2}, 
we refer the interested readers to \cite{Jleli,Mubeen,Set,Tomar}. 
Using these operators, Agarwal et al. established the following 
Hermite--Hadamard type result for convex functions \cite{AMT}.

\begin{thm}[See \cite{AMT}]
\label{AMTresult}
Let $\alpha,\,k>0$ and $r\in\mathbb{R}\setminus\{-1\}$. 
If $g$ is a convex function on $[a, b]$, then
$$
g\left(\frac{a+b}{2}\right)
\leq \frac{(r+1)^{\frac{\alpha}{k}}
\Gamma_k(\alpha+k)}{4(b^{r+1}-a^{r+1})^{\frac{\alpha}{k}}}
\left[_{k}^{r}{\mathcal{J}}_{a^+}^{\alpha}G(b)
+ \,_{k}^{s}{\mathcal{J}}_{b^-}^{\alpha}G(a)\right]
\leq \frac{g(a)+g(b)}{2},
$$
where function $G$ is defined by \eqref{F} below.
\end{thm}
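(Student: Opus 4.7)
The plan is to combine classical pointwise convexity estimates with a change of variables that converts ordinary integrals on $[0,1]$ into the $(k,r)$--Riemann--Liouville operators of \eqref{D1}--\eqref{D2}. The starting point is the standard double-convexity chain: for every $s\in[0,1]$,
\begin{equation*}
2\,g\!\left(\frac{a+b}{2}\right)
\;\leq\; g(sa+(1-s)b)+g((1-s)a+sb)
\;\leq\; g(a)+g(b),
\end{equation*}
where the left inequality is the midpoint form of Jensen applied to the two symmetric convex combinations, and the right inequality follows from $g(sa+(1-s)b)\leq sg(a)+(1-s)g(b)$ together with its mirror image $g((1-s)a+sb)\leq(1-s)g(a)+sg(b)$.

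Next, I would multiply this chain by a non-negative kernel built from $s^{\alpha/k-1}$ and $(1-s)^{\alpha/k-1}$ and integrate over $s\in[0,1]$. Using $\int_0^1 s^{\alpha/k-1}\,ds=k/\alpha$ and the symmetry $s\leftrightarrow 1-s$, the outer terms collapse to explicit multiples of $g((a+b)/2)$ and of $g(a)+g(b)$, while the middle term becomes a $[0,1]$-integral of $[s^{\alpha/k-1}+(1-s)^{\alpha/k-1}]$ against a symmetric combination of $g$ at linear interpolations of the endpoints.

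The crucial step is then the substitution $t^{r+1}=a^{r+1}+s(b^{r+1}-a^{r+1})$, with Jacobian $t^r\,dt=\tfrac{b^{r+1}-a^{r+1}}{r+1}\,ds$, which transforms the monomials $s^{\alpha/k-1}$ and $(1-s)^{\alpha/k-1}$ into $(t^{r+1}-a^{r+1})^{\alpha/k-1}$ and $(b^{r+1}-t^{r+1})^{\alpha/k-1}$ up to explicit powers of $b^{r+1}-a^{r+1}$. The function $G$ introduced in \eqref{F} is precisely the pullback that packages the expressions $g(sa+(1-s)b)$ and $g((1-s)a+sb)$ as a single function of the new variable $t$, so that the middle term reorganizes cleanly into ${}_{k}^{r}\mathcal{J}_{a^+}^{\alpha}G(b)+{}_{k}^{r}\mathcal{J}_{b^-}^{\alpha}G(a)$. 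Collecting the prefactors $(r+1)^{1-\alpha/k}/(k\Gamma_k(\alpha))$ from the operator definitions together with the Jacobian power and the $k/\alpha$ from the monomial integrals, and simplifying via $\Gamma_k(\alpha+k)=\alpha\Gamma_k(\alpha)$, yields the announced constant $(r+1)^{\alpha/k}\Gamma_k(\alpha+k)/(4(b^{r+1}-a^{r+1})^{\alpha/k})$.

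The main obstacle I expect is bookkeeping rather than any deep analytic step: tracking the factors of $2$ and of $r+1$ through the substitution, and matching the definition of $G$ in \eqref{F} to the exact combination of $g$-values produced by the convexity chain. The nonlinearity of the pullback $t(s)=(a^{r+1}+s(b^{r+1}-a^{r+1}))^{1/(r+1)}$ prevents applying convexity directly in the variable $t$; it is precisely the role of $G$ to reinstate a linear-in-$s$ argument, so that the convexity bounds, which act naturally on straight-line interpolations of $a$ and $b$, transport without loss to the $(k,r)$-weighted side.
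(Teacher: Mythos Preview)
The paper does not prove Theorem~\ref{AMTresult} itself---it is quoted from \cite{AMT}---but the proof of Theorem~\ref{MR1} specializes (via Remark~\ref{r1}) to the right-hand inequality, so that is the relevant benchmark.

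Your plan has a genuine gap at the substitution step. After multiplying the chain by $s^{\alpha/k-1}$ and setting $t^{r+1}=a^{r+1}+s(b^{r+1}-a^{r+1})$, the weight indeed becomes proportional to $(t^{r+1}-a^{r+1})^{\alpha/k-1}t^r\,dt$, but the function values in the integrand remain $g(sa+(1-s)b)+g((1-s)a+sb)=G\bigl(sa+(1-s)b\bigr)$ with $s=s(t)=\tfrac{t^{r+1}-a^{r+1}}{b^{r+1}-a^{r+1}}$. Since $sa+(1-s)b\neq t$ whenever $r\neq 0$, this is \emph{not} $G(t)$, and the resulting integral is not ${}_{k}^{r}\mathcal{J}_{b^-}^{\alpha}G(a)$ (nor its companion). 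The definition $G(x)=g(x)+g(a+b-x)$ encodes the reflection $x\mapsto a+b-x$; it does nothing to undo the nonlinear reparametrization $s\mapsto t$, contrary to what your last paragraph asserts.

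The fix---and the route taken in the proof of Theorem~\ref{MR1}---is to drop the substitution entirely. The convexity chain $2g\bigl(\tfrac{a+b}{2}\bigr)\le G\bigl(tb+(1-t)a\bigr)\le g(a)+g(b)$ holds pointwise for $t\in[0,1]$, so one multiplies directly by the actual $(k,r)$-kernel
\[
(b-a)\,\frac{(r+1)^{1-\alpha/k}}{k\Gamma_k(\alpha)}\cdot
\frac{(tb+(1-t)a)^r}{\bigl[b^{r+1}-(tb+(1-t)a)^{r+1}\bigr]^{1-\alpha/k}}
\]
(and its companion with denominator $\bigl[(tb+(1-t)a)^{r+1}-a^{r+1}\bigr]^{1-\alpha/k}$), integrates over $[0,1]$, and reads off ${}_{k}^{r}\mathcal{J}_{a^+}^{\alpha}G(b)$ and ${}_{k}^{r}\mathcal{J}_{b^-}^{\alpha}G(a)$ from the representations \eqref{D3}--\eqref{D6}. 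The constant outer terms integrate against the same kernel to give the normalizing factor via $\Gamma_k(\alpha+k)=\alpha\Gamma_k(\alpha)$. No change of variable from a power kernel is needed, and for $r\neq 0$ none is available.
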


Inspired by the above works, it is our purpose to obtain here 
more general integral inequalities associated to $\eta$-convex functions 
via the $(k,r)$-Riemann--Liouville fractional operators. Theorems~\ref{MR1} 
and \ref{MR2} generalize Theorems~\ref{AMTresult} and \ref{KKAResult}, respectively 
(see Remarks~\ref{r1} and \ref{r2}). In addition, two more fractional
Hermite--Hadamard type inequalities are also established 
(see Theorems~\ref{MR3} and \ref{MR4}).


\section{Main results}
\label{MR}

We establish four new results. For this, 
we start by making the following observations. Let $g$ 
be a function defined on $I$ with $[a, b]\subset I^{\circ}$ 
and define functions
$G,\,\tilde{g}:[a, b]\rightarrow\mathbb{R}$ by 
\begin{equation}
\label{F}
\tilde{g}(x):=g(a+b-x)~~\mbox{and}~~~G(x):=g(x)+\tilde{g}(x).
\end{equation}
For the fractional operators to be well defined, we shall assume 
$g\in L_{\infty}[a, b]$. By making use of the substitutions 
$w=\frac{t-a}{x-a}$ and $w=\frac{b-t}{b-x}$ in \eqref{D1} 
and \eqref{D2}, respectively, one gets that 
\begin{equation}
\label{D3}
_{k}^{r}{\mathcal{J}}_{a^+}^{\alpha}g(x)
=(x-a)\frac{(r+1)^{1-\frac{\alpha}{k}}}{k\Gamma_k(\alpha)}
\int_0^1\frac{(wx+(1-w)a)^rg(wx+(1-w)a)}{\left[
x^{r+1}-(wx+(1-w)a)^{r+1}\right]^{1-\frac{\alpha}{k}}}\,dw
\end{equation}
and
\begin{equation}
\label{D4}
_{k}^{r}{\mathcal{J}}_{b^-}^{\alpha}g(x)
=(b-x)\frac{(r+1)^{1-\frac{\alpha}{k}}}{k\Gamma_k(\alpha)}
\int_0^1\frac{(wx+(1-w)b)^rg(wx+(1-w)b)}{\left[
(wx+(1-w)b)^{r+1}-x^{r+1}\right]^{1-\frac{\alpha}{k}}}\,dw.
\end{equation}
Noting that $\tilde{g}\left((1-w)a+wb\right)=g\left(wa+(1-w)b\right)$, 
we also obtain
\begin{equation}
\label{D5}
_{k}^{r}{\mathcal{J}}_{a^+}^{\alpha}\tilde{g}(x)
=(x-a)\frac{(r+1)^{1-\frac{\alpha}{k}}}{k\Gamma_k(\alpha)}
\int_0^1\frac{(wx+(1-w)a)^rg((1-w)x+wa)}{\left[
x^{r+1}-(wx+(1-w)a)^{r+1}\right]^{1-\frac{\alpha}{k}}}\,dw
\end{equation}
and 
\begin{equation}
\label{D6}
_{k}^{r}{\mathcal{J}}_{b^-}^{\alpha}\tilde{g}(x)
=(b-x)\frac{(r+1)^{1-\frac{\alpha}{k}}}{k\Gamma_k(\alpha)}
\int_0^1\frac{(wx+(1-w)b)^rg((1-w)x+wb)}{\left[
(wx+(1-w)b)^{r+1}-x^{r+1}\right]^{1-\frac{\alpha}{k}}}\,dw.
\end{equation}
We are now ready to formulate and prove our first result.

\begin{thm}
\label{MR1}
Let $\alpha,\,k>0$, $r\in\mathbb{R}\setminus\{-1\}$, and $g:I\rightarrow\mathbb{R}$ 
be a positive function on $[a, b]\subset I^{\circ}$ with $a<b$.  If, in addition, 
$g$ is $\eta$-convex on $[a, b]$ with $\eta$ bounded on $g([a,b])\times g([a,b])$, 
then the $(k, r)$-fractional integral inequality 
\begin{align*}
\frac{(r+1)^{\frac{\alpha}{k}}\Gamma_k(\alpha+k)}{4(b^{r+1}-a^{r+1})^{\frac{\alpha}{k}}}
\left[_{k}^{r}{\mathcal{J}}_{a^+}^{\alpha}G(b)
+ \,_{k}^{r}{\mathcal{J}}_{b^-}^{\alpha}G(a)\right]
\leq g(b)+\frac{\eta(g(a),g(b))}{2}
\end{align*}
holds.
\end{thm}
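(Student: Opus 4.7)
The plan is to use the substitution forms \eqref{D3}--\eqref{D6} of the fractional operators to reduce the left-hand side to a single integral of $G$ along the segment between $a$ and $b$, and then to exploit $\eta$-convexity to bound the resulting integrand by a constant. First I would set $x=b$ in \eqref{D3} and \eqref{D5} to express $_{k}^{r}{\mathcal{J}}_{a^+}^{\alpha}G(b)={}_{k}^{r}{\mathcal{J}}_{a^+}^{\alpha}g(b)+{}_{k}^{r}{\mathcal{J}}_{a^+}^{\alpha}\tilde g(b)$ as a single integral on $[0,1]$, and similarly set $x=a$ in \eqref{D4} and \eqref{D6} for $_{k}^{r}{\mathcal{J}}_{b^-}^{\alpha}G(a)$. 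In each case the sum $g+\tilde g$ produces the pair $g((1-w)a+wb)+g(wa+(1-w)b)$ in the numerator, which is precisely $G$ evaluated along the symmetry $x\mapsto a+b-x$.

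The crucial step is to apply Definition~\ref{MD} twice with $(x,y)=(a,b)$, once with $\beta=1-w$ and once with $\beta=w$, yielding
$$
g((1-w)a+wb)\leq g(b)+(1-w)\eta(g(a),g(b)),\qquad g(wa+(1-w)b)\leq g(b)+w\eta(g(a),g(b)).
$$
Adding these two inequalities, the $w$-dependence cancels and we obtain the uniform bound $g((1-w)a+wb)+g(wa+(1-w)b)\leq 2g(b)+\eta(g(a),g(b))$ valid for every $w\in[0,1]$. Consequently the sum $_{k}^{r}{\mathcal{J}}_{a^+}^{\alpha}G(b)+{}_{k}^{r}{\mathcal{J}}_{b^-}^{\alpha}G(a)$ is at most $[2g(b)+\eta(g(a),g(b))]\cdot[{}_{k}^{r}{\mathcal{J}}_{a^+}^{\alpha}\mathbf{1}(b)+{}_{k}^{r}{\mathcal{J}}_{b^-}^{\alpha}\mathbf{1}(a)]$, where $\mathbf{1}$ denotes the constant function one.

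The remaining task is to evaluate the $(k,r)$-fractional integral of $\mathbf{1}$. Using \eqref{D1} with $g\equiv 1$ and the substitution $u=b^{r+1}-t^{r+1}$ (and the analogous $u=t^{r+1}-a^{r+1}$ for \eqref{D2}), both $_{k}^{r}{\mathcal{J}}_{a^+}^{\alpha}\mathbf{1}(b)$ and $_{k}^{r}{\mathcal{J}}_{b^-}^{\alpha}\mathbf{1}(a)$ reduce to $(r+1)^{-\alpha/k}(b^{r+1}-a^{r+1})^{\alpha/k}/\Gamma_k(\alpha+k)$, after invoking the identity $\alpha\Gamma_k(\alpha)=\Gamma_k(\alpha+k)$. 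Multiplying by the prefactor $(r+1)^{\alpha/k}\Gamma_k(\alpha+k)/[4(b^{r+1}-a^{r+1})^{\alpha/k}]$ collapses all parameters and leaves exactly $g(b)+\eta(g(a),g(b))/2$. The argument is not genuinely obstructed anywhere; the only subtlety is to choose both instances of the $\eta$-convexity estimate with the same $y=b$ so that the resulting constant upper bound involves $g(b)$ alone with the correct coefficient of $\eta$, which is precisely what the $1/4$ prefactor is calibrated to absorb.
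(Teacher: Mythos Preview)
Your proposal is correct and follows essentially the same route as the paper: both derive the pointwise bound $g((1-w)a+wb)+g(wa+(1-w)b)\le 2g(b)+\eta(g(a),g(b))$ from two applications of $\eta$-convexity with $y=b$, integrate it against the $(k,r)$-fractional kernel, and evaluate the resulting integral of the kernel (equivalently, of the constant $\mathbf{1}$) explicitly via the substitution $u=b^{r+1}-t^{r+1}$ and the identity $\alpha\Gamma_k(\alpha)=\Gamma_k(\alpha+k)$. The only cosmetic difference is that the paper multiplies the inequality by the kernel first and then identifies the left side via \eqref{D3}--\eqref{D6}, whereas you start from \eqref{D3}--\eqref{D6} and then bound; the computations are identical.
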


\begin{proof}
Function $g$ is $\eta$-convex on $[a, b]$, which implies, 
by definition, the following inequalities for $t\in[0,1]$:
\begin{equation}
\label{Q1}
g(ta+(1-t)b)\leq g(b)+t\eta(g(a),g(b))
\end{equation}
and 
\begin{equation}
\label{Q2}
g((1-t)a+tb)\leq g(b)+(1-t)\eta(g(a),g(b)).
\end{equation}
Adding inequalities \eqref{Q1} and \eqref{Q2}, we get
\begin{equation}
\label{Q3}
g(ta+(1-t)b)+ g((1-t)a+tb)\leq 2g(b)+\eta(g(a),g(b)).
\end{equation}
Multiplying both sides of \eqref{Q3} by 
$$
(b-a)\frac{(r+1)^{1-\frac{\alpha}{k}}}{k\Gamma_k(\alpha)}
\frac{(tb+(1-t)a)^r}{\left[b^{r+1}
-(tb+(1-t)a)^{r+1}\right]^{1-\frac{\alpha}{k}}},
$$
and integrating over $[0, 1]$ with respect to $t$, we get
\begin{align*}
&(b-a)\frac{(r+1)^{1-\frac{\alpha}{k}}}{k\Gamma_k(\alpha)}
\int_0^1\frac{(tb+(1-t)a)^rg((1-t)b+ta)}{\left[
b^{r+1}-(tb+(1-t)a)^{r+1}\right]^{1-\frac{\alpha}{k}}}\,dt\\
&+(b-a)\frac{(r+1)^{1-\frac{\alpha}{k}}}{k\Gamma_k(\alpha)}\int_0^1
\frac{(tb+(1-t)a)^rg(tb+(1-t)a)}{\left[
b^{r+1}-(tb+(1-t)a)^{r+1}\right]^{1-\frac{\alpha}{k}}}\,dt\\
&\leq \left[2g(b)+\eta(g(a),g(b))\right](b-a)
\frac{(r+1)^{1-\frac{\alpha}{k}}}{k\Gamma_k(\alpha)}
\int_0^1\frac{(tb+(1-t)a)^r}{\left[
b^{r+1}-(tb+(1-t)a)^{r+1}\right]^{1-\frac{\alpha}{k}}}\,dt.
\end{align*}
Now, using \eqref{D3} and \eqref{D5} in the above inequality, we get
$$
_{k}^{r}{\mathcal{J}}_{a^+}^{\alpha}\tilde{g}(b)
+ _{k}^{r}{\mathcal{J}}_{a^+}^{\alpha}g(b)
\leq\frac{(s+1)^{1-\frac{\alpha}{k}}(b^{s+1}-a^{s+1})^{\frac{\alpha}{k}}}{(s+1)
\alpha\Gamma_{k}(\alpha)}\left[2g(b)+\eta(g(a),g(b))\right],
$$
that is,
\begin{equation}
\label{eq1}
_{k}^{r}{\mathcal{J}}_{a^+}^{\alpha}G(b)
\leq \frac{(b^{r+1}-a^{r+1})^{\frac{\alpha}{k}}}{(r+1)^{\frac{\alpha}{k}}
\Gamma_{k}(\alpha+k)}\left[2g(b)+\eta(g(a),g(b))\right].
\end{equation}
Similarly, multiplying again both sides of \eqref{Q3} by 
$$
(b-a)\frac{(r+1)^{1-\frac{\alpha}{k}}}{k\Gamma_k(\alpha)}
\frac{(tb+(1-t)a)^r}{\left[(tb+(1-t)a)^{r+1}-a^{r+1}\right]^{1-\frac{\alpha}{k}}}
$$
and integrating with respect to $t$ over $[0, 1]$, we obtain that
\begin{equation}
\label{eq2}
_{k}^{r}{\mathcal{J}}_{b^-}^{\alpha}G(a)
\leq \frac{(b^{r+1}-a^{r+1})^{\frac{\alpha}{k}}}{(r+1)^{\frac{\alpha}{k}}
\Gamma_{k}(\alpha+k)}\left[2g(b)+\eta(g(a),g(b))\right].
\end{equation}
Hence, the intended inequality follows by adding \eqref{eq1} and \eqref{eq2}.
\end{proof}

\begin{remark}
\label{r1}
By taking $\eta(x,y)=x-y$ in our Theorem~\ref{MR1}, 
we recover the right-hand side of the inequalities 
in Theorem~\ref{AMTresult}.
\end{remark}

For the rest of our results, we will need the following two lemmas.

\begin{lem}[See \cite{AMT}]
\label{lem1}
Let $\alpha,\,k>0$ and $r\in\mathbb{R}\setminus\{-1\}$. 
If $g:I\rightarrow\mathbb{R}$ is differentiable on $I^{\circ}$ 
and $a, b\in I^{\circ}$ such that $g'\in L[a, b]$ with $a<b$, 
then the following identity holds:
\begin{multline*}
\frac{g(a)+g(b)}{2}-\frac{(r+1)^{\frac{\alpha}{k}}
\Gamma_k(\alpha+k)}{4(b^{r+1}-a^{r+1})^{\frac{\alpha}{k}}}
\left[_{k}^{r}{\mathcal{J}}_{a^+}^{\alpha}G(b)
+ \,_{k}^{s}{\mathcal{J}}_{b^-}^{\alpha}G(a)\right]\\
=\frac{b-a}{4(b^{r+1}-a^{r+1})^{\frac{\alpha}{k}}}
\int_0^1\Theta_{\alpha,r}(t)g'(ta+(1-t)b)\,dt,
\end{multline*}
where
$\Theta_{\alpha,r}:[0, 1]\rightarrow\mathbb{R}$ is defined by
\begin{multline*}
\Theta_{\alpha,r}(t)
:=\left[(ta+(1-t)b)^{r+1}-a^{r+1}\right]^{\frac{\alpha}{k}}
-\left[(tb+(1-t)a)^{r+1}-a^{r+1}\right]^{\frac{\alpha}{k}}\\
+ \left[b^{r+1}-(tb+(1-t)a)^{r+1}\right]^{\frac{\alpha}{k}}
-\left[b^{r+1}-(ta+(1-t)b)^{r+1}\right]^{\frac{\alpha}{k}}.
\end{multline*}
\end{lem}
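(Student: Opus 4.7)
The plan is to verify the claimed identity by starting from the right-hand side and applying integration by parts once to the integral against $g'(ta+(1-t)b)$. Split $\Theta_{\alpha,r}(t)=F_1(t)+F_2(t)+F_3(t)+F_4(t)$, one piece per bracketed term in its definition. Since $-(b-a)^{-1}g(ta+(1-t)b)$ is an antiderivative of $g'(ta+(1-t)b)$, each $\int_0^1 F_i(t)g'(ta+(1-t)b)\,dt$ decomposes as a boundary contribution plus $\frac{1}{b-a}\int_0^1 F_i'(t)g(ta+(1-t)b)\,dt$.

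For the boundary, a direct evaluation gives $F_1(1)=F_3(1)=0$ and $F_2(0)=F_4(0)=0$, while each of the four surviving endpoint values equals $\pm(b^{r+1}-a^{r+1})^{\alpha/k}$. Summing with the correct signs from integration by parts produces $\frac{2(b^{r+1}-a^{r+1})^{\alpha/k}}{b-a}\bigl(g(a)+g(b)\bigr)$, which under the prefactor $\frac{b-a}{4(b^{r+1}-a^{r+1})^{\alpha/k}}$ collapses to the target $\frac{g(a)+g(b)}{2}$.

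For the remaining integrals, differentiating each bracket produces a factor $\pm\tfrac{\alpha(r+1)(b-a)}{k}$ times a kernel of the exact shape appearing in one of \eqref{D3}--\eqref{D6}. Using the reflection $t\mapsto 1-t$ to swap $ta+(1-t)b$ with $tb+(1-t)a$, together with the identity $g(ta+(1-t)b)=\tilde g(tb+(1-t)a)$ (which follows from $\tilde g(x)=g(a+b-x)$), one matches $F_1'$ to the kernel of ${}_{k}^{r}{\mathcal{J}}_{b^-}^{\alpha}g(a)$, $F_2'$ to that of ${}_{k}^{r}{\mathcal{J}}_{b^-}^{\alpha}\tilde g(a)$, $F_3'$ to ${}_{k}^{r}{\mathcal{J}}_{a^+}^{\alpha}\tilde g(b)$, and $F_4'$ to ${}_{k}^{r}{\mathcal{J}}_{a^+}^{\alpha}g(b)$. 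Since $G=g+\tilde g$, the four pieces consolidate into ${}_{k}^{r}{\mathcal{J}}_{a^+}^{\alpha}G(b)+{}_{k}^{r}{\mathcal{J}}_{b^-}^{\alpha}G(a)$; the relation $\Gamma_k(\alpha+k)=\alpha\Gamma_k(\alpha)$ then absorbs the $\alpha/k$ prefactor to yield the fractional-integral term on the left-hand side with the correct minus sign.

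The main obstacle is purely clerical bookkeeping: tracking the signs produced when differentiating the four brackets (one variant yields $(a-b)$, the other $(b-a)$), correctly pairing $ta+(1-t)b$ with $tb+(1-t)a$ under the involution $t\mapsto 1-t$, and assigning each of the four kernels to the proper $(k,r)$-operator and argument ($g$ vs.\ $\tilde g$, $a^+$ vs.\ $b^-$). Once this accounting is carried out consistently, all pieces combine cleanly into the claimed identity.
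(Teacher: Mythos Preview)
The paper itself does not prove this lemma; it is quoted verbatim from \cite{AMT} with no argument given. Your integration-by-parts outline is the standard route for such identities and is correct: the boundary evaluations, the derivative computations of the four brackets, and the identifications with the kernels in \eqref{D3}--\eqref{D6} all check out, and the use of $\Gamma_k(\alpha+k)=\alpha\Gamma_k(\alpha)$ to absorb the $\alpha/k$ factor is exactly right. What you have written would serve as a self-contained proof of the lemma, supplying what the present paper omits by citation.
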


\begin{lem}
\label{lem2}
Under the conditions of Lemma~\ref{lem1}, we have that 
$$
\int_0^1|\Theta_{\alpha, r}(t)|\,dt
=\frac{1}{b-a}\left(\Re_1+\Re_2+\Re_3+\Re_4\right),
$$
where
$$
\Re_1=\int^b_{\frac{a+b}{2}}\left(w^{r+1}-a^{r+1}\right)^{\frac{\alpha}{k}}\,dw
-\int^{\frac{a+b}{2}}_a\left(w^{r+1}-a^{r+1}\right)^{\frac{\alpha}{k}}\,dw,
$$
$$
\Re_2=\int^b_{\frac{a+b}{2}}\left[b^{r+1}-(b+a-w)^{r+1}\right]^{\frac{\alpha}{k}}\,dw
-\int^{\frac{a+b}{2}}_a\left[b^{r+1}-(b+a-w)^{r+1}\right]^{\frac{\alpha}{k}}\,dw,
$$
$$
\Re_3=\int_a^{\frac{a+b}{2}}\left(b^{r+1}-w^{r+1}\right)^{\frac{\alpha}{k}}\,dw
-\int_{\frac{a+b}{2}}^b\left(b^{r+1}-w^{r+1}\right)^{\frac{\alpha}{k}}\,dw,
$$
and
$$
\Re_4=\int_a^{\frac{a+b}{2}}\left[(b+a-w)^{r+1}-a^{r+1}\right]^{\frac{\alpha}{k}}\,dw
-\int_{\frac{a+b}{2}}^b\left[(b+a-w)^{r+1}-a^{r+1}\right]^{\frac{\alpha}{k}}\,dw.
$$
\end{lem}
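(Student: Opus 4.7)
The plan is to remove the absolute value by first pinning down the sign of $\Theta_{\alpha,r}$ on each half of $[0,1]$, and then to evaluate each of the four resulting pieces by a change of variable chosen so that the four expressions $\Re_1,\Re_2,\Re_3,\Re_4$ emerge directly.

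Set $\phi(t):=ta+(1-t)b$ and $\psi(t):=tb+(1-t)a$, so that $\phi$ is decreasing from $b$ to $a$, $\psi$ is increasing from $a$ to $b$, $\phi(t)+\psi(t)=a+b$, and $\phi(1/2)=\psi(1/2)=(a+b)/2$. The operators \eqref{D1}--\eqref{D2} are meaningful when $r+1>0$ and $0<a<b$, so $x\mapsto x^{r+1}$ is increasing on $[a,b]$; for $t\in[0,1/2]$ we then have $\phi(t)\ge(a+b)/2\ge\psi(t)$, whence
$$
[\phi(t)^{r+1}-a^{r+1}]^{\alpha/k}\ge[\psi(t)^{r+1}-a^{r+1}]^{\alpha/k}
\quad\text{and}\quad
[b^{r+1}-\psi(t)^{r+1}]^{\alpha/k}\ge[b^{r+1}-\phi(t)^{r+1}]^{\alpha/k},
$$
so $\Theta_{\alpha,r}(t)\ge 0$. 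The reverse inequalities hold on $[1/2,1]$, giving $\Theta_{\alpha,r}(t)\le 0$ there. Consequently,
$$
\int_0^1|\Theta_{\alpha,r}(t)|\,dt
=\int_0^{1/2}\Theta_{\alpha,r}(t)\,dt-\int_{1/2}^{1}\Theta_{\alpha,r}(t)\,dt.
$$

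The second step is to evaluate the contribution of each of the four summands of $\Theta_{\alpha,r}$, using a substitution tailored so that the resulting integrand coincides with one of the forms in $\Re_1,\dots,\Re_4$. For the first summand $[\phi^{r+1}-a^{r+1}]^{\alpha/k}$, I would set $w=\phi(t)$; the factor $dt=-dw/(b-a)$ flips orientation, and splitting at $t=1/2$ (i.e. $w=(a+b)/2$) yields precisely $\Re_1/(b-a)$. For the second summand $-[\psi^{r+1}-a^{r+1}]^{\alpha/k}$ I would again use $w=\phi(t)$ and rewrite $\psi=a+b-w$; after accounting for the overall minus sign this contributes $\Re_4/(b-a)$. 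Symmetrically, the third and fourth summands, handled with $w=\psi(t)$ (with $\phi=a+b-w$ in the fourth), produce $\Re_3/(b-a)$ and $\Re_2/(b-a)$ respectively. Summing the four pieces yields the asserted identity.

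The main delicate point is the bookkeeping of substitutions: a priori each term in $\Theta_{\alpha,r}$ could be transformed by $w=\phi$ or $w=\psi$, and only a specific pairing produces integrands matching the target forms $[w^{r+1}-a^{r+1}]^{\alpha/k}$, $[b^{r+1}-w^{r+1}]^{\alpha/k}$, $[(a+b-w)^{r+1}-a^{r+1}]^{\alpha/k}$, $[b^{r+1}-(a+b-w)^{r+1}]^{\alpha/k}$ defining $\Re_1,\Re_4,\Re_3,\Re_2$. Once this matching is identified, the remainder is a routine application of the change-of-variables formula. A minor subtlety is the sign analysis above, which implicitly requires $r>-1$ so that the powers $x^{r+1}$ are monotone and the bracketed quantities raised to $\alpha/k$ remain non-negative on the relevant sub-intervals.
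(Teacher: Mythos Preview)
Your argument is correct and follows essentially the same route as the paper: determine the sign of $\Theta_{\alpha,r}$ on each half of $[0,1]$ and then integrate term by term via the affine substitution $w=ta+(1-t)b$ (your second substitution $w=\psi(t)$ is just the reflection $w\mapsto a+b-w$ of the first, so a single substitution suffices and reproduces exactly the paper's function $\wp$). The only cosmetic difference is that the paper makes the substitution first and then argues sign via monotonicity of $\wp$, whereas you establish the sign directly by a pairwise comparison before substituting; both are equivalent and your version is arguably more explicit.
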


\begin{proof}
Using the substitution $w=ta+(1-t)b$, we get
\begin{equation}
\label{p}
\int_0^1|\Theta_{\alpha, r}(t)|\,dt=\frac{1}{b-a}\int_a^b|\wp(w)|\,dw,
\end{equation}
where 
\begin{multline*}
\wp(w)=\left(w^{r+1}-a^{r+1}\right)^{\frac{\alpha}{k}}
-\left[(b+a-w)^{r+1}-a^{r+1}\right]^{\frac{\alpha}{k}}\\
+ \left[b^{r+1}-(b+a-w)^{r+1}\right]^{\frac{\alpha}{k}}
-\left(b^{r+1}-w^{r+1}\right)^{\frac{\alpha}{k}}.
\end{multline*}
The required result follows from \eqref{p} and by observing 
that $\wp$ is a non-decreasing function on $[a, b]$, 
$\wp(a)=-2(b^{r+1}-a^{r+1})^{\frac{\alpha}{k}}<0$, 
$\wp\left(\frac{a+b}{2}\right)=0$, and thus
\begin{equation*}
\begin{cases}
\wp(w)\leq 0 & \mbox{if}\quad a\leq w\leq\frac{a+b}{2},\\
\wp(w)>0 & \mbox{if}\quad \frac{a+b}{2}< w\leq b.
\end{cases}
\end{equation*}
This concludes the proof.
\end{proof}

\begin{thm}
\label{MR2}
Let $\alpha,\,k>0$, $r\in\mathbb{R}\setminus\{-1\}$, $g:I\rightarrow\mathbb{R}$  
be a differentiable function on $I^{\circ}$ and $a, b\in I^{\circ}$ with $a<b$.  
Suppose $|g'|$ is $\eta$-convex on $[a, b]$ with $\eta$ bounded on 
$|g'|([a,b])\times |g'|([a,b])$. Then the following
$(k, r)$-fractional integral inequality holds:
\begin{multline*}
\left|\frac{g(a)+g(b)}{2}-\frac{(r+1)^{\frac{\alpha}{k}}
\Gamma_k(\alpha+k)}{4(b^{r+1}-a^{r+1})^{\frac{\alpha}{k}}}
\left[_{k}^{r}{\mathcal{J}}_{a^+}^{\alpha}G(b)
+ \,_{k}^{r}{\mathcal{J}}_{b^-}^{\alpha}G(a) \right]\right|\\
\leq \frac{1}{4(b^{r+1}-a^{r+1})^{\frac{\alpha}{k}}}
\left[\Re|g'(b)|+\frac{\Xi}{b-a}\eta(|g'(a)|,|g'(b)|)\right],
\end{multline*}
where
$\Re=\Re_1+\Re_2+\Re_3+\Re_4$ (see Lemma~\ref{lem2}) 
and $\Xi=\xi_1+\xi_2+\xi_3+\xi_4$ with
$$
\xi_1=\int_a^{\frac{a+b}{2}}(b-w)(b^{r+1}-w^{r+1})^{\frac{\alpha}{k}}\,dw
-\int_{\frac{a+b}{2}}^b(b-w)(b^{r+1}-w^{r+1})^{\frac{\alpha}{k}}\,dw,
$$
$$
\xi_2=\int_{\frac{a+b}{2}}^b(b-w)(w^{r+1}-a^{r+1})^{\frac{\alpha}{k}}\,dw
-\int_a^{\frac{a+b}{2}}(b-w)(w^{r+1}-a^{r+1})^{\frac{\alpha}{k}}\,dw,
$$
$$
\xi_3=\int_a^{\frac{a+b}{2}}(b-w)((b+a-w)^{r+1}-a^{r+1})^{\frac{\alpha}{k}}\,dw
-\int_{\frac{a+b}{2}}^b(b-w)((b+a-w)^{r+1}-a^{r+1})^{\frac{\alpha}{k}}\,dw,
$$
$$
\xi_4=\int_{\frac{a+b}{2}}^b(b-w)(b^{r+1}-(b+a-w)^{r+1})^{\frac{\alpha}{k}}\,dw
-\int_a^{\frac{a+b}{2}}(b-w)(b^{r+1}-(b+a-w)^{r+1})^{\frac{\alpha}{k}}\,dw.
$$
\end{thm}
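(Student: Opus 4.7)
The plan is to combine Lemma~\ref{lem1} with the $\eta$-convexity hypothesis on $|g'|$ and then reduce the resulting weighted integral to the quantity $\Xi$ via the substitution used in Lemma~\ref{lem2}. Concretely, I would start from the identity of Lemma~\ref{lem1}, pull the constant $\frac{b-a}{4(b^{r+1}-a^{r+1})^{\alpha/k}}$ outside, and apply the triangle inequality to obtain
\begin{equation*}
\left|\frac{g(a)+g(b)}{2}-\frac{(r+1)^{\frac{\alpha}{k}}\Gamma_k(\alpha+k)}{4(b^{r+1}-a^{r+1})^{\frac{\alpha}{k}}}\left[{_k^r}{\mathcal{J}}_{a^+}^{\alpha}G(b)+{_k^r}{\mathcal{J}}_{b^-}^{\alpha}G(a)\right]\right|
\leq \frac{b-a}{4(b^{r+1}-a^{r+1})^{\frac{\alpha}{k}}}\int_0^1|\Theta_{\alpha,r}(t)|\,|g'(ta+(1-t)b)|\,dt.
\end{equation*}

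Next, I would invoke the $\eta$-convexity of $|g'|$ with $\beta=t$, $x=a$, $y=b$ in Definition~\ref{MD} to write $|g'(ta+(1-t)b)|\leq |g'(b)|+t\,\eta(|g'(a)|,|g'(b)|)$, split the resulting bound into two pieces, and factor out $|g'(b)|$ and $\eta(|g'(a)|,|g'(b)|)$. The first piece, proportional to $\int_0^1|\Theta_{\alpha,r}(t)|\,dt$, is handled directly by Lemma~\ref{lem2} and contributes the term $\Re\,|g'(b)|$. For the second piece, I need to evaluate $(b-a)\int_0^1 t\,|\Theta_{\alpha,r}(t)|\,dt$, and this is where the new work lies.

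The key step is then to perform the substitution $w=ta+(1-t)b$ exactly as in the proof of Lemma~\ref{lem2}. Under this substitution $t=(b-w)/(b-a)$, so the extra factor of $t$ becomes $(b-w)/(b-a)$, and the same auxiliary function $\wp$ appears. Using the sign analysis established in Lemma~\ref{lem2} (namely $\wp\leq 0$ on $[a,(a+b)/2]$ and $\wp>0$ on $((a+b)/2,b]$), I would rewrite
\begin{equation*}
(b-a)\int_0^1 t\,|\Theta_{\alpha,r}(t)|\,dt=\frac{1}{b-a}\left[\int_{\frac{a+b}{2}}^b(b-w)\wp(w)\,dw-\int_a^{\frac{a+b}{2}}(b-w)\wp(w)\,dw\right],
\end{equation*}
and expand $\wp$ term by term. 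Matching each of the four summands of $\wp$ against the definitions of $\xi_1,\xi_2,\xi_3,\xi_4$ (being careful that the two negative summands in $\wp$ reverse the role of the two half-intervals) will show that the bracket equals $\Xi$, giving the claimed $\Xi/(b-a)$ coefficient of $\eta(|g'(a)|,|g'(b)|)$.

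The main obstacle is purely bookkeeping: keeping track of signs when distributing the sign pattern of $\wp$ over the four terms and verifying that the orientation of the two sub-integrals in each $\xi_i$ matches what the expansion produces. Once that sign-matching is done, the theorem follows at once. No new analytic tools beyond Lemmas~\ref{lem1} and \ref{lem2} and Definition~\ref{MD} are required.
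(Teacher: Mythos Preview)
Your proposal is correct and follows essentially the same route as the paper: start from Lemma~\ref{lem1}, apply the triangle inequality and the $\eta$-convexity bound $|g'(ta+(1-t)b)|\le |g'(b)|+t\,\eta(|g'(a)|,|g'(b)|)$, then evaluate $\int_0^1|\Theta_{\alpha,r}|\,dt$ and $\int_0^1 t|\Theta_{\alpha,r}|\,dt$ separately. The only cosmetic difference is that the paper quotes the identity $\int_0^1 t|\Theta_{\alpha,r}(t)|\,dt=\Xi/(b-a)^2$ from \cite[p.~9]{AMT}, whereas you (correctly) rederive it via the substitution $w=ta+(1-t)b$ and the sign analysis of $\wp$ from Lemma~\ref{lem2}.
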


\begin{proof}
Since $|f'|$ is $\eta$-convex, it follows, by definition, that 
\begin{equation}
\label{Qua}
\left|g'(ta+(1-t)b)\right|
\leq |g'(b)|+t\eta\left(|g'(a)|,|g'(b)|\right)
\end{equation}
for $t\in[0,1]$. From \cite[p.~9]{AMT}, we have
\begin{equation}
\label{id}
\int_0^1t|\Theta_{\alpha, r}(t)|\,dt=\frac{\xi_1+\xi_2+\xi_3+\xi_4}{(b-a)^2}.
\end{equation}
Using Lemmas~\ref{lem1} and \ref{lem2}, inequality \eqref{Qua}, 
identity \eqref{id}, and properties of the modulus, we obtain
\begin{align*}
&\left|\frac{g(a)+g(b)}{2}-\frac{(r+1)^{\frac{\alpha}{k}}
\Gamma_k(\alpha+k)}{4(b^{r+1}-a^{r+1})^{\frac{\alpha}{k}}}
\left[_{k}^{r}{\mathcal{J}}_{a^+}^{\alpha}G(b)
+ \,_{k}^{r}{\mathcal{J}}_{b^-}^{\alpha}G(a) \right]\right|\\
&\leq \frac{b-a}{4(b^{r+1}-a^{r+1})^{\frac{\alpha}{k}}}
\int_0^1|\Theta_{\alpha,r}(t)||g'(ta+(1-t)b)|\,dt\\
&\leq \frac{b-a}{4(b^{r+1}-a^{r+1})^{\frac{\alpha}{k}}}
\int_0^1|\Theta_{\alpha,r}(t)|\left[|g'(b)|+t\eta(|g'(a)|,|g'(b)|)\right]\,dt\\
&=\frac{b-a}{4(b^{r+1}-a^{r+1})^{\frac{\alpha}{k}}}\left(|g'(b)|
\int_0^1|\Theta_{\alpha,r}(t)|\,dt+\eta(|g'(a)|,|g'(b)|)
\int_0^1t|\Theta_{\alpha,r}(t)|\,dt\right)\\
&=\frac{b-a}{4(b^{s+1}-a^{s+1})^{\frac{\alpha}{k}}}
\left[|g'(b)|\frac{1}{b-a}\left(\Re_1+\Re_2+\Re_3+\Re_4\right)
+ \eta(|g'(a)|,|g'(b)|)\frac{\xi_1+\xi_2+\xi_3+\xi_4}{(b-a)^2}\right].
\end{align*}
The desired result follows.
\end{proof}

\begin{remark}
\label{r2}
By taking $r=0$ and $k=1$ in Theorem~\ref{MR2}, we recover Theorem~\ref{KKAResult}. 
In this case, 
$$
\Re=\frac{4}{\alpha+1}(b-a)^{\alpha+1}\left(1-\frac{1}{2^{\alpha}}\right)
$$
and
$$
\Xi=\frac{2}{\alpha+1}(b-a)^{\alpha+2}\left(1-\frac{1}{2^{\alpha}}\right).
$$
\end{remark}

\begin{thm}
\label{MR3}
Let $g$ be differentiable on $I^{\circ}$ with $a, b\in I^{\circ}$. 
If $|g'|^q$ is $\eta$-convex on $[a,b]$ and $q>1$ with $\eta$ bounded 
on $|g'|^q([a,b])\times |g'|^q([a,b])$, then the 
$(k, r)$-fractional integral inequality 
\begin{multline*}
\left|\frac{g(a)+g(b)}{2}-\frac{(r+1)^{\frac{\alpha}{k}}
\Gamma_k(\alpha+k)}{4(b^{r+1}-a^{r+1})^{\frac{\alpha}{k}}}
\left[_{k}^{r}{\mathcal{J}}_{a^+}^{\alpha}G(b)
+ \,_{k}^{r}{\mathcal{J}}_{b^-}^{\alpha}G(a) \right]\right|\\
\leq \frac{b-a}{4(b^{r+1}-a^{r+1})^{\frac{\alpha}{k}}}
\left( |g'(b)|^q+\frac{\eta(|g'(a)|^q,|g'(b)|^q)}{2}
\right)^{\frac{1}{q}}||\Theta_{\alpha,r}||_p
\end{multline*}
holds, where $\frac{1}{p}+\frac{1}{q}=1$ and 
$||\Theta_{\alpha,r}||_p
=\left(\int_0^1|\Theta_{\alpha,r}(t)|^p\,dt\right)^{\frac{1}{p}}$.
\end{thm}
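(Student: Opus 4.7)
The plan is to start from the fractional integral identity in Lemma~\ref{lem1} and then combine Hölder's inequality with the $\eta$-convexity hypothesis on $|g'|^q$. This is the standard template for deriving power-mean style Hermite--Hadamard estimates, adapted here to the $(k,r)$-setting.

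First I would apply Lemma~\ref{lem1} and take absolute values on both sides, using the triangle inequality to pull the modulus inside the integral, yielding
$$
\left|\frac{g(a)+g(b)}{2}-\frac{(r+1)^{\frac{\alpha}{k}}\Gamma_k(\alpha+k)}{4(b^{r+1}-a^{r+1})^{\frac{\alpha}{k}}}\left[{}^{r}_{k}{\mathcal{J}}_{a^+}^{\alpha}G(b)+{}^{r}_{k}{\mathcal{J}}_{b^-}^{\alpha}G(a)\right]\right|\leq \frac{b-a}{4(b^{r+1}-a^{r+1})^{\frac{\alpha}{k}}}\int_0^1|\Theta_{\alpha,r}(t)|\,|g'(ta+(1-t)b)|\,dt.
$$
This is exactly the same starting inequality that was used in the proof of Theorem~\ref{MR2}; the difference is the treatment of the remaining integral.

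Next I would apply Hölder's inequality with exponents $p$ and $q$ satisfying $\frac{1}{p}+\frac{1}{q}=1$:
$$
\int_0^1|\Theta_{\alpha,r}(t)|\,|g'(ta+(1-t)b)|\,dt\leq \left(\int_0^1|\Theta_{\alpha,r}(t)|^p\,dt\right)^{\frac{1}{p}}\left(\int_0^1|g'(ta+(1-t)b)|^q\,dt\right)^{\frac{1}{q}}.
$$
The first factor is, by definition, $\|\Theta_{\alpha,r}\|_p$. For the second factor I would invoke the $\eta$-convexity of $|g'|^q$, which gives
$$
|g'(ta+(1-t)b)|^q\leq |g'(b)|^q+t\,\eta(|g'(a)|^q,|g'(b)|^q),
$$
and then integrate in $t$ over $[0,1]$, using $\int_0^1 t\,dt=\frac{1}{2}$, to obtain
$$
\int_0^1|g'(ta+(1-t)b)|^q\,dt\leq |g'(b)|^q+\frac{\eta(|g'(a)|^q,|g'(b)|^q)}{2}.
$$
Raising to the $1/q$ power and assembling the three bounds yields the claim.

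The only conceptual subtlety is ensuring that the integrals involved are finite so Hölder and the integration of the $\eta$-convexity estimate are legitimate. Finiteness of $\int_0^1|g'(ta+(1-t)b)|^q\,dt$ is guaranteed by the $\eta$-convexity bound together with boundedness of $\eta$ on $|g'|^q([a,b])\times|g'|^q([a,b])$; finiteness of $\|\Theta_{\alpha,r}\|_p$ follows from the explicit form of $\Theta_{\alpha,r}$, which is continuous on $[0,1]$. Beyond this bookkeeping, the argument is a routine Hölder-plus-convexity computation, and I expect no real obstacle.
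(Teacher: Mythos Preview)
Your proposal is correct and follows essentially the same approach as the paper: start from the identity of Lemma~\ref{lem1}, pass to absolute values, apply H\"older's inequality with exponents $p,q$, and bound the $q$-integral via the $\eta$-convexity estimate $|g'(ta+(1-t)b)|^q\leq |g'(b)|^q+t\,\eta(|g'(a)|^q,|g'(b)|^q)$ integrated over $[0,1]$. Your additional remarks on the finiteness of the integrals are not in the paper's proof but are a welcome piece of rigor.
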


\begin{proof}
Function $|g'|^q$ is $\eta$-convex, which implies 
\begin{equation}
\label{Quaq}
|g'(ta+(1-t)b)|^q\leq |g'(b)|^q+t\eta(|g'(a)|^q,|g'(b)|^q),
\end{equation}
$t\in[0,1]$. Using Lemma~\ref{lem1}, inequality \eqref{Quaq}, 
H\"older's inequality and the properties of modulus, we get
\begin{align*}
&\left|\frac{g(a)+g(b)}{2}-\frac{(r+1)^{\frac{\alpha}{k}}
\Gamma_k(\alpha+k)}{4(b^{r+1}-a^{r+1})^{\frac{\alpha}{k}}}
\left[_{k}^{r}{\mathcal{J}}_{a^+}^{\alpha}G(b)
+ \,_{k}^{r}{\mathcal{J}}_{b^-}^{\alpha}G(a) \right]\right|\\
&\leq \frac{b-a}{4(b^{r+1}-a^{r+1})^{\frac{\alpha}{k}}}
\int_0^1|\Theta_{\alpha,r}(t)||g'(ta+(1-t)b)|\,dt\\
&\leq\frac{b-a}{4(b^{r+1}-a^{r+1})^{\frac{\alpha}{k}}}\left(
\int_0^1|\Theta_{\alpha,r}(t)|^p\,dt\right)^{\frac{1}{p}}
\left(\int_0^1 |g'(ta+(1-t)b)|^q\,dt\right)^{\frac{1}{q}}\\
&\leq\frac{b-a}{4(b^{r+1}-a^{r+1})^{\frac{\alpha}{k}}}\left(
\int_0^1|\Theta_{\alpha,r}(t)|^p\,dt\right)^{\frac{1}{p}}\left(
\int_0^1\left[ |g'(b)|^q+t\eta(|g'(a)|^q,|g'(b)|^q)\right]\,dt\right)^{\frac{1}{q}}\\
&=\frac{b-a}{4(b^{r+1}-a^{r+1})^{\frac{\alpha}{k}}}\left(\int_0^1
|\Theta_{\alpha,r}(t)|^p\,dt\right)^{\frac{1}{p}}\left( |g'(b)|^q
+\frac{\eta(|g'(a)|^q,|g'(b)|^q)}{2}\right)^{\frac{1}{q}}.
\end{align*}
This completes the proof.
\end{proof}

\begin{thm}
\label{MR4}
Let $g$ be differentiable on $I^{\circ}$ with $a, b\in I^{\circ}$. 
If $|g'|^q$ is $\eta$-convex on $[a,b]$ and $q>1$ with $\eta$ bounded 
on $|g'|^q([a,b])\times |g'|^q([a,b])$, then the 
$(k, r)$-fractional integral inequality 
\begin{multline*}
\left|\frac{g(a)+g(b)}{2}-\frac{(r+1)^{\frac{\alpha}{k}}
\Gamma_k(\alpha+k)}{4(b^{r+1}-a^{r+1})^{\frac{\alpha}{k}}}
\left[_{k}^{r}{\mathcal{J}}_{a^+}^{\alpha}G(b)
+ \,_{k}^{r}{\mathcal{J}}_{b^-}^{\alpha}G(a)\right]\right|\\
\leq\frac{\Re^{\frac{1}{p}}}{4(b^{r+1}-a^{r+1})^{\frac{\alpha}{k}}}
\left[\left|g'(b)\right|^q+\frac{\Xi}{b-a}\eta\left(
|g'(a)|^q,|g'(b)|^q\right)\right]^{\frac{1}{q}}
\end{multline*}
holds, where $\frac{1}{p}+\frac{1}{q}=1$ and $\Re$ and $\Xi$ 
are defined as in Theorem~\ref{MR2}.
\end{thm}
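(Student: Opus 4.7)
The plan is to follow the template of the proof of Theorem~\ref{MR3}, but to replace the naive H\"older split $|\Theta_{\alpha,r}|\cdot|g'|$ by a weighted (``power-mean'') split in which $|\Theta_{\alpha,r}|$ itself serves as a common weight in both H\"older factors, so that $\Re^{1/p}$ appears instead of $\|\Theta_{\alpha,r}\|_p$. Concretely, after invoking Lemma~\ref{lem1} to identify the left-hand side as
$$\left|\frac{b-a}{4(b^{r+1}-a^{r+1})^{\alpha/k}}\int_0^1 \Theta_{\alpha,r}(t)\,g'(ta+(1-t)b)\,dt\right|,$$
I would pass the absolute value inside the integral and decompose $|\Theta_{\alpha,r}(t)| = |\Theta_{\alpha,r}(t)|^{1/p}\,|\Theta_{\alpha,r}(t)|^{1/q}$. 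H\"older's inequality with conjugate exponents $p,q$ then yields
$$\int_0^1|\Theta_{\alpha,r}(t)||g'(ta+(1-t)b)|\,dt \le \left(\int_0^1|\Theta_{\alpha,r}(t)|\,dt\right)^{1/p}\left(\int_0^1 |\Theta_{\alpha,r}(t)||g'(ta+(1-t)b)|^q\,dt\right)^{1/q}.$$

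Next I would apply the $\eta$-convexity of $|g'|^q$ pointwise, namely $|g'(ta+(1-t)b)|^q \le |g'(b)|^q + t\,\eta(|g'(a)|^q,|g'(b)|^q)$, and distribute the second H\"older integral as $|g'(b)|^q\int_0^1|\Theta_{\alpha,r}|\,dt + \eta(|g'(a)|^q,|g'(b)|^q)\int_0^1 t|\Theta_{\alpha,r}|\,dt$. Both of these integrals are already at my disposal: Lemma~\ref{lem2} supplies $\int_0^1|\Theta_{\alpha,r}(t)|\,dt = \Re/(b-a)$, while identity~\eqref{id} established in the proof of Theorem~\ref{MR2} supplies $\int_0^1 t|\Theta_{\alpha,r}(t)|\,dt = \Xi/(b-a)^2$. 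In particular, no new integral computation is needed beyond what is already recorded earlier.

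It remains to collect the factors. The prefactor $b-a$ pulled out by Lemma~\ref{lem1} cancels against the $(b-a)^{-1/p}(b-a)^{-1/q}$ extracted from the two H\"older factors via $\tfrac{1}{p}+\tfrac{1}{q}=1$, leaving the coefficient $\Re^{1/p}/[4(b^{r+1}-a^{r+1})^{\alpha/k}]$ outside the $q$-th root and the bracket $|g'(b)|^q + \tfrac{\Xi}{b-a}\eta(|g'(a)|^q,|g'(b)|^q)$ inside, which is the announced bound. The only genuinely non-mechanical step is the choice of the weighted H\"older split at the outset: the naive split $|\Theta||g'|$ (as in Theorem~\ref{MR3}) would give $\|\Theta_{\alpha,r}\|_p$ rather than $\Re^{1/p}$, so the principal obstacle is recognizing that $|\Theta|^{1/p}\cdot|\Theta|^{1/q}|g'|$ is the right split to combine with the already-computed quantities $\int|\Theta|$ and $\int t|\Theta|$ and recover the stated form.
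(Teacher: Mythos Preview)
Your approach is exactly the paper's: the ``power mean inequality'' invoked there is precisely the weighted H\"older split $|\Theta|=|\Theta|^{1/p}|\Theta|^{1/q}$ you describe, followed by the $\eta$-convexity bound and the evaluations $\int_0^1|\Theta|\,dt=\Re/(b-a)$ and $\int_0^1 t|\Theta|\,dt=\Xi/(b-a)^2$ from Lemma~\ref{lem2} and \eqref{id}. One bookkeeping point: carrying your own computation through, the second H\"older factor is $\bigl(|g'(b)|^q\,\Re/(b-a)+\eta(\cdots)\,\Xi/(b-a)^2\bigr)^{1/q}$, so after the $(b-a)$ cancellations the bracket inside the $q$-th root is $\Re\,|g'(b)|^q+\tfrac{\Xi}{b-a}\eta(\cdots)$, not $|g'(b)|^q+\tfrac{\Xi}{b-a}\eta(\cdots)$; this matches the form of Theorem~\ref{MR2} and indicates a misprint in the stated bound rather than an error in your argument.
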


\begin{proof}
Following a similar approach as in the proof of Theorem~\ref{MR3}, 
we have, by using  Lemmas~\ref{lem1} and \ref{lem2} combined with 
the power mean inequality plus inequality \eqref{Quaq}, that
\begin{align*}
&\left|\frac{g(a)+g(b)}{2}-\frac{(r+1)^{\frac{\alpha}{k}}
\Gamma_k(\alpha+k)}{4(b^{r+1}-a^{r+1})^{\frac{\alpha}{k}}}
\left[_{k}^{r}{\mathcal{J}}_{a^+}^{\alpha}G(b)
+ \,_{k}^{r}{\mathcal{J}}_{b^-}^{\alpha}G(a)\right]\right|\\
&\leq \frac{b-a}{4(b^{r+1}-a^{r+1})^{\frac{\alpha}{k}}}
\int_0^1|\Theta_{\alpha,r}(t)||g'(ta+(1-t)b)|\,dt\\
&\leq\frac{b-a}{4(b^{r+1}-a^{r+1})^{\frac{\alpha}{k}}}\left(
\int_0^1|\Theta_{\alpha,r}(t)|\,dt\right)^{1-\frac{1}{q}}\left(
\int_0^1|\Theta_{\alpha,r}(t)| |g'(ta+(1-t)b)|^q\,dt\right)^{\frac{1}{q}}\\
&\leq\frac{b-a}{4(b^{r+1}-a^{r+1})^{\frac{\alpha}{k}}}\left(
\int_0^1|\Theta_{\alpha,r}(t)|\,dt\right)^{1-\frac{1}{q}}\left(
\int_0^1|\Theta_{\alpha,r}(t)|\left[ |g'(b)|^q
+t\eta(|g'(a)|^q,|g'(b)|^q)\right]\,dt\right)^{\frac{1}{q}}.
\end{align*}
The required inequality follows.
\end{proof}


\section*{Acknowledgements}

This research was supported by FCT and CIDMA,
project UID/MAT/04106/2013. The authors are grateful
to the referees for their valuable comments 
and helpful suggestions.




\begin{thebibliography}{00}

\bibitem{AMT} 
P. Agarwal, M. Jleli\ and\ M. Tomar, 
Certain Hermite-Hadamard type inequalities via generalized $k$-fractional integrals, 
J. Inequal. Appl. {\bf 2017} (2017), Paper No.~55, 10~pp. 

\bibitem{MR2761482}
P. Cerone\ and\ S. S. Dragomir, 
{\it Mathematical inequalities}, CRC Press, Boca Raton, FL, 2011. 

\bibitem{MR3670501}
S. S. Dragomir, 
Generalization and reverses of the left Fej\'er inequality for convex functions, 
J. Nonlinear Sci. Appl. {\bf 10} (2017), no.~6, 3231--3244. 

\bibitem{GDD} 
M. Eshaghi Gordji, S. S. Dragomir\ and\ M. Rostamian Delavar, 
An inequality related to $\eta$-convex functions (II), 
Int. J. Nonlinear Anal. Appl. {\bf 6} (2015), no.~2, 26--32.

\bibitem{GDS} 
M. Eshaghi Gordji, M. Rostamian Delavar\ and\ M. De La Sen, 
On $\phi$-convex functions, 
J. Math. Inequal. {\bf 10} (2016), no.~1, 173--183. 

\bibitem{Hadamard} 
J. Hadamard, 
\'Etude sur les propri\'et\'es des fonctions enti\'eres 
et en particulier d'une fonction consid\'er\'ee par Riemann, 
J. Math. Pures Appl. {\bf 9} (1893), 171--216.

\bibitem{Jleli} 
M. Jleli, D. O'Regan\ and\ B. Samet, 
On Hermite-Hadamard type inequalities via generalized fractional integrals, 
Turkish J. Math. {\bf 40} (2016), no.~6, 1221--1230.

\bibitem{MR3684864}
A. Kashuri\ and\ R. Liko, 
Hermite-Hadamard type inequalities for $MT_m$-preinvex functions, 
Fasc. Math. {\bf 58} (2017), 77--96. 

\bibitem{KKA} 
M. A. Khan, Y. Khurshid\ and\ T. Ali, 
Hermite-Hadamard inequality for fractional integrals via $\eta$-convex functions, 
Acta Math. Univ. Comenian. (N.S.) {\bf 86} (2017), no.~1, 153--164. 

\bibitem{Mubeen} 
S. Mubeen\ and\ G. M. Habibullah, 
$k$-fractional integrals and application, 
Int. J. Contemp. Math. Sci. {\bf 7} (2012), no.~1-4, 89--94. 

\bibitem{MR3614829}
E. R. Nwaeze\ and\ D. F. M. Torres, 
Chain rules and inequalities for the BHT fractional calculus on arbitrary timescales, 
Arab. J. Math. (Springer) {\bf 6} (2017), no.~1, 13--20. 
{\tt arXiv:1611.09049}

\bibitem{DS} 
M. Rostamian Delavar\ and\ M. De La Sen, 
Some generalizations of Hermite--Hadamard type inequalities, 
SpringerPlus {\bf 5} (2016), Paper No.~1661, 9~pp. 

\bibitem{SDKA} 
M. Z. Sarikaya, Z. Dahmani, M. E. Kiris\ and\ F. Ahmad, 
$(k,s)$-Riemann-Liouville fractional integral and applications, 
Hacet. J. Math. Stat. {\bf 45} (2016), no.~1, 77--89. 

\bibitem{Set} 
E. Set, M. Tomar\ and\ M. Z. Sarikaya, 
On generalized Gr\"uss type inequalities for $k$-fractional integrals, 
Appl. Math. Comput. {\bf 269} (2015), 29--34. 

\bibitem{MR3639749}
Y. Shuang\ and\ F. Qi, 
Integral inequalities of the Hermite-Hadamard type for $(\alpha,m)$-GA-convex functions, 
J. Nonlinear Sci. Appl. {\bf 10} (2017), no.~4, 1854--1860. 

\bibitem{Tomar} 
M. Tomar, S. Mubeen\ and\ J. Choi, 
Certain inequalities associated with Hadamard $k$-fractional integral operators, 
J. Inequal. Appl. {\bf 2016} (2016), Paper No.~234, 14~pp. 

\end{thebibliography}
\end{document}